\renewcommand{\MR}[1]{\href{http://www.ams.org/mathscinet-getitem?mr=#1}{MR#1}}
\newcommand{\Zbl}[1]{\href{http://zbmath.org/?q=an:#1}{Zbl~#1}}
\title[Horospherical limit points of $S$-arithmetic groups]
{Horospherical limit points \\ of  $S$-arithmetic groups}
\author{Dave Witte Morris}
\address{Department of Mathematics and Computer Science, University of Lethbridge, Lethbridge, Alberta, T1K\,6R4, Canada}
\email{Dave.Morris@uleth.ca, http://people.uleth.ca/$\sim$dave.morris/}
\author{Kevin Wortman}
\address{Department of Mathematics, University of Utah, 
Salt Lake City, UT 84112-0090}
\email{wortman@math.utah.edu, http://www.math.utah.edu/$\sim$wortman/}
\date{\today}
\DeclareMathOperator{\CAT}{CAT}
\DeclareMathOperator{\Char}{char}
\newcommand{\real}{\mathbb{R}}
\newcommand{\rational}{\mathbb{Q}}
\newcommand{\integer}{\mathbb{Z}}
\newcommand{\closure}{\overline}
\newcommand{\GG}{\mathbf{G}}
\newcommand{\PP}{\mathbf{P}}
\newcommand{\RR}{\mathbf{R}} 
\newcommand{\TT}{\mathbf{T}} 
\newcommand{\EE}{\mathbf{E}} 
\newcommand{\MM}{\mathbf{M}} 
\newcommand{\NN}{\mathbf{N}} 
\newcommand{\weight}{\mathfrak{g}}
\newcommand{\bdry}{\partial}
\numberwithin{equation}{section}
\theoremstyle{plain}
\newtheorem{thm}[equation]{Theorem}
\newtheorem{prop}[equation]{Proposition}
\newtheorem{lem}[equation]{Lemma}
\crefname{cor}{Corollary}{Corollaries}
\Crefname{cor}{Corollary}{Corollaries}
\crefname{lem}{Lemma}{Lemmas}
\Crefname{lem}{Lemma}{Lemmas}
\theoremstyle{definition}
\newtheorem{notation}[equation]{Notation}
\newtheorem{defn}[equation]{Definition}
\newtheorem*{ack}{Acknowledgments}
\theoremstyle{definition}
\newtheorem{rem}[equation]{Remark}
\newtheorem{rems}[equation]{Remarks}
 \newcounter{case}
 \renewcommand{\thecase}{\arabic{case}}
 \renewcommand{\Cref}{\cref}
 \newcommand{\pref}[1]{(\ref{#1})}
\newcommand{\noprelistbreak}{\smallskip\@nobreaktrue\nopagebreak} 
\def\cref@thmoptarg[#1]#2#3#4{%
  \ifhmode\unskip\unskip\par\fi%
  \normalfont%
  \trivlist%
  \let\thmheadnl\relax%
  \let\thm@swap\@gobble%
  \thm@notefont{\fontseries\mddefault\upshape}%
  \thm@headpunct{.}
  \thm@headsep 5\p@ plus\p@ minus\p@\relax%
  \thm@space@setup%
  #2
  \@topsep \thm@preskip               
  \@topsepadd \thm@postskip           
  \def\@tempa{#3}\ifx\@empty\@tempa%
    \def\@tempa{\@oparg{\@begintheorem{#4}{}}[]}%
  \else%
    \refstepcounter[#1]{#3}
    \def\dth@counter{} 
    \def\@tempa{\@oparg{\@begintheorem{#4}{\csname the#3\endcsname}}[]}%
  \fi%
  \@tempa}
\begin{document}

\begin{abstract}
Suppose $\Gamma$ is an $S$-arithmetic subgroup of a connected, semisimple algebraic group~$\GG$ over a global field~$Q$ (of any characteristic). It is well known that $\Gamma$ acts by isometries on a certain $\CAT(0)$ metric space $X_S = \prod_{v \in S} X_v$, where each $X_v$ is either a Euclidean building or a Riemannian symmetric space. For a point~$\xi$ on the visual boundary of~$X_S$, we show there exists a horoball based at~$\xi$ that is disjoint from some $\Gamma$-orbit in~$X_S$ if and only if $\xi$~lies on the boundary of a certain type of flat in~$X_S$ that we call ``$Q$-good.''
This generalizes a theorem of G.\,Avramidi and D.\,W.\,Morris that characterizes the horospherical limit points for the action of an arithmetic group on its associated symmetric space~$X$.
\end{abstract}

\maketitle

\setcounter{tocdepth}{1} 
\tableofcontents

\section{Introduction}

\begin{defn}[{}{\cite[Defn.~B]{Hattori-GeomLimSets}}] \label{HoroLimDefn}
Suppose the group $\Gamma$ acts by isometries on the $\CAT(0)$ metric space~$X$, and fix $x \in X$.
A point~$\xi$ on the visual boundary of~$X$ is a \emph{horospherical limit point} for~$\Gamma$ if every horoball based at~$\xi$ intersects the orbit $x \cdot \Gamma$. Notice that this definition is independent of the choice of $x$. Also note that if $\Lambda$ is a finite-index subgroup of~$\Gamma$, then~$\xi$ is a horospherical limit point for~$\Lambda$ if and only if it is a horospherical limit point for~$\Gamma$. 
\end{defn}

In the situation where $\Gamma$ is an arithmetic group, with its natural action on its associated symmetric space~$X$, the horospherical limit points have a simple geometric characterization:

\begin{thm}[Avramidi-Morris {\cite[Thm.~1.3]{AvramidiMorris}}]
Let 
\noprelistbreak
	\begin{itemize} 
	\item $\GG$ be a connected, semisimple algebraic group over\/~$\rational$, 
	\item $K$ be a maximal compact subgroup of the Lie group\/ $\GG(\real)$,
	\item $X = K \backslash \GG(\real)$ be the corresponding symmetric space of noncompact type\/ \textup{(}with the natural metric induced by the Killing form of\/ $\GG(\real)$\textup{)},
	and
	\item $\Gamma$ be an arithmetic subgroup of\/~$\GG$.
	\end{itemize}
Then a point $\xi \in \bdry X$ is \textbf{not} a horospherical limit point for\/~$\Gamma$ if and only if $\xi$ is on the boundary of some flat~$F$ in~$X$, such that $F$ is the orbit of a\/ $\rational$-split torus in\/~$\GG(\real)$. 
 \end{thm}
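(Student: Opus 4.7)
The plan is to prove the two implications separately, in each case translating the Busemann function on~$X$ into the logarithm of the norm of a vector in a suitable linear representation of~$\GG$, and then exploiting the $\rational$-structure of~$\Gamma$.

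For the ``if'' direction, suppose $F = x \cdot T(\real)^{\circ}$ is the orbit of a $\rational$-split torus~$T$ and $\xi \in \bdry F$. The direction of~$\xi$ inside~$F$ is a $\rational$-rational weight on~$T$, so there is a $\rational$-parabolic subgroup $P \supset T$ stabilizing~$\xi$. I would choose an irreducible $\rational$-rational representation $(\rho,V)$ of~$\GG$ whose highest weight is (a positive multiple of) this weight, and let $v_0 \in V(\rational)$ be a highest-weight vector. For a suitable $K$-invariant norm on~$V(\real)$, one then has, after rescaling,
\[
b_\xi(x\cdot g)\;=\;-\log\|\rho(g)\,v_0\|+C
\]
for some constant~$C$. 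Because $\Gamma$ preserves a $\integer$-lattice in~$V(\rational)$, the vectors $\rho(\gamma)v_0$ lie in this lattice and hence have norm bounded below by a positive constant; so $b_\xi$ is bounded above on $x\cdot\Gamma$, and any sufficiently deep horoball at~$\xi$ avoids the orbit.

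For the converse, suppose the horoball $H_\xi = \{\,y \in X : b_\xi(y) < -r\,\}$ is disjoint from $x\cdot\Gamma$. Let $P_\xi$ be the parabolic $\real$-subgroup stabilizing~$\xi$. As in the previous direction, the $\xi$-direction determines a highest-weight line $\real\cdot v \subset V(\real)$ in some irreducible $\real$-rational representation $(\rho,V)$ of~$\GG$, and the same Busemann-norm identity converts the disjointness hypothesis into
\[
\inf_{\gamma\in\Gamma}\|\rho(\gamma)\,v\|\;>\;0.
\]
The main obstacle is to extract from this lower bound that $\real\cdot v$ is defined over~$\rational$, and hence that $P_\xi$ is $\rational$-parabolic; once this is done, a maximal $\rational$-split torus in a $\rational$-Levi of $P_\xi$ that is compatible with the $\xi$-direction will provide a flat~$F$ with $\xi\in\bdry F$.

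To cross that obstacle I would invoke Borel--Harish-Chandra reduction theory: express $\GG(\real)$ as a union $\bigcup_i \Gamma\cdot\Omega_i$ of Siegel sets attached to representatives of the finitely many $\Gamma$-conjugacy classes of $\rational$-parabolic subgroups, and analyze the action on~$v$ of the abelian and unipotent parts of the Siegel coordinates. If $\real\cdot v$ were \emph{not} $\rational$-rational, then by combining a unipotent one-parameter subgroup in a $\rational$-parabolic with an appropriate diagonal element one should be able to contract $\rho(\cdot)v$ arbitrarily close to~$0$ along the $\Gamma$-orbit---a Mahler-compactness style argument---contradicting the lower bound. This forces $v$ to be $\rational$-rational, and then the geometric identification of the flat inside $P_\xi$ completes the proof.
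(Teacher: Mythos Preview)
This theorem is quoted from \cite{AvramidiMorris}; the present paper proves the $S$-arithmetic generalization (\cref{HoroLimSArith}), whose specialization to $Q=\rational$ and $S=\{\infty\}$ yields the statement. So the relevant comparison is with that proof.

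Your ``if'' direction contains a real gap. The direction of~$\xi$ inside the $\rational$-split flat~$F$ is an arbitrary ray in the real vector space~$F$ and need \emph{not} be a rational multiple of a character of~$T$: as soon as $\rank{\rational}\GG\ge 2$ there are uncountably many irrational directions on~$\bdry F$. Hence there is in general no single $\rational$-representation whose highest weight is a positive multiple of that direction, and your identity $b_\xi=-\log\|\rho(\cdot)v_0\|+C$ cannot hold. (The existence of a $\rational$-parabolic fixing~$\xi$ is correct, but that is not what you actually use.) The standard repair is to take one representation $(\rho_\alpha,V_\alpha,v_\alpha)$ for each simple $\rational$-root~$\alpha$ that is positive on the direction of~$\xi$, with highest weight a multiple of the fundamental weight~$\omega_\alpha$, and to write $b_\xi$ as a nonnegative real combination of the functions $-\log\|\rho_\alpha(\cdot)v_\alpha\|$; then your lattice argument bounds each summand. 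The paper's own argument for this direction (\cref{SArithQGoodNotHoro}) avoids representations: working directly with the root-space decomposition, it uses \cref{InSpanMustLarge} to find a simple root~$\alpha$ that is large along the ray to~$\xi$, and deduces that conjugation by group elements deep in the horoball contracts $\NN_\alpha(\integer)$ so strongly that some nontrivial integral unipotent is sent close to~$e$---a Mahler criterion applied inside~$G$ rather than in a linear representation.

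The ``only if'' direction is where your plan diverges most and where the main difficulty sits. First, the single-representation problem recurs on the $\real$-side: for a generic~$\xi$ the Busemann function is not $-\log\|\rho(g)v\|$ for any one $\real$-rational highest-weight vector, so the line whose $\rational$-rationality you propose to test is not even well defined. Second, the step ``if $\real\cdot v$ were not $\rational$-rational then reduction theory plus a unipotent--diagonal contraction drives $\|\rho(\gamma)v\|\to 0$'' is a hope rather than an argument; nothing in Siegel-set reduction theory manufactures such a contraction for an arbitrary $\real$-line, and making this precise is essentially the entire problem. The paper does \emph{not} proceed this way. In \cref{SarithNotDense->parab} one takes the maximal horospherical subgroup~$N_S$ fixing~$\xi$ and applies Ratner's theorem (via Margulis--Tomanov \cite{MargulisTomanov-InvtMeas}, Ratner \cite{Ratner-padic}, or Mohammadi \cite{Mohammadi-MeasHoroPosChar} in positive characteristic) to the orbit closure $\closure{N_S\Gamma}$; the output is a parabolic $Q$-subgroup~$\PP$ with $N_S\subseteq M_S^*U_S\subseteq P_S$, after which a short argument with the Langlands decomposition shows that $P_S$ fixes~$\xi$ and $\PP(Z_S)$ fixes the horospheres. \Cref{SarithParab->Qgood} then extracts the required flat from~$\PP$. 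The essential input is Ratner's theorem, and your reduction-theory sketch does not substitute for it.
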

 
This note proves a natural generalization that allows $\Gamma$ to be $S$-arithmetic (of any characteristic), rather than arithmetic. 
The precise statement assumes familiarity with the theory of Bruhat-Tits buildings \cite{Tits-RedGrpsLocFlds}, and requires some additional notation.

\begin{notation} \ 
\noprelistbreak
	\begin{enumerate}
	\item Let
\noprelistbreak
		\begin{itemize}
		\item $Q$ be a global field (of any characteristic),
		\item $\GG$ be a connected, semisimple algebraic group over~$Q$,
		\item $S$ be a finite set of places of~$Q$ (containing all the archimedean places if the characteristic of~$Q$ is~$0$),
		\item $G_v = \GG(Q_v)$ for each $v \in S$, where $Q_v$ is the completion of~$Q$ at~$v$,
		\item $K_v$ be a maximal compact subgroup of~$G_v$, for each $v \in S$,
		and
		\item $Z_S$ be the ring of $S$-integers in~$Q$.
		\end{itemize}
	\item Adding the subscript~$S$ to any symbol other than~$Z$ denotes the Cartesian product over all elements of~$S$. Thus, for example, we have $G_S =  \prod_{v \in S} G_v = \prod_{v \in S} \GG(Q_v)$.
	\item For each $v \in S$, let 
	$$ X_v = \begin{cases}
	\text{the symmetric space $K_v \backslash \GG(Q_v)$} & \text{if $v$~is archimedean} ,\\
	\text{the Bruhat-Tits building of $\GG(Q_v)$} & \text{if $v$~is nonarchimedean} 
	. \end{cases}
	$$
	Thus, $G_v = \GG(Q_v)$ acts properly and cocompactly by isometries on the $\mathrm{CAT}(0)$ metric space~$X_v$. So $G_S$ acts properly and cocompactly by isometries on the $\mathrm{CAT}(0)$ metric space~$X_S = \prod_{v \in S} X_v$.
	\end{enumerate}
\end{notation}

\begin{defn}
We say a family $\{Y_t\}_{t \in \real}$ of subsets of~$X_S$ is \emph{uniformly coarsely dense} in $X_S/\GG(Z_S)$ if there exists $C > 0$, such that, for every $t \in \real$, each $\GG(Z_S)$-orbit in~$X_S$ has a point that is at distance $< C$ from some point in~$Y_t$.
\end{defn}

See \cref{SarithQgoodDefn} for the definition of a $Q$-good flat in~$X_S$.

\begin{thm}[cf.\ {\cite[Cor.~4.5]{AvramidiMorris}}] \label{HoroLimSArith} 
For a point $\xi$ on the visual boundary of $X_S = \prod_{v \in S} X_v$, the following are equivalent:
\noprelistbreak
	\begin{enumerate}
	
	\item \label{HoroLimSArith-horo}
	$\xi$ is a horospherical limit point for\/~$\GG(Z_S)$.

	\item \label{HoroLimSArith-Qgood}
	$\xi$~is not on the boundary of any $Q$-good flat.

	\item \label{HoroLimSArith-parab}
	There does not exist a parabolic $Q$-subgroup\/~$\PP$ of\/~$\GG$, such that\/ $P_S$ fixes~$\xi$, and\/ $\PP(Z_S)$ fixes some\/ \textup(or, equivalently, every\/\textup) horosphere based at~$\xi$.

	\item \label{HoroLimSArith-SpheresDense}
	The horospheres based at~$\xi$ are uniformly coarsely dense in $X_S / \GG(Z_S)$.

	\item \label{HoroLimSArith-BallsDense}
	The horoballs based at~$\xi$ are uniformly coarsely dense in $X_S / \GG(Z_S)$.
	
	\item \label{HoroLimSArith-onto}
	$\pi(\mathcal{B}) = X_S/\GG(Z_S)$ for every horoball~$\mathcal{B}$ based at~$\xi$, where $\pi \colon X_S \to X_S/\GG(Z_S)$ is the natural covering map.

	\end{enumerate}
\end{thm}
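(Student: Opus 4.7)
The plan is to verify the cycle \pref{HoroLimSArith-onto} $\Rightarrow$ \pref{HoroLimSArith-BallsDense} $\Rightarrow$ \pref{HoroLimSArith-SpheresDense} $\Rightarrow$ \pref{HoroLimSArith-onto}, together with the implications \pref{HoroLimSArith-onto} $\Rightarrow$ \pref{HoroLimSArith-horo} $\Rightarrow$ \pref{HoroLimSArith-parab} $\Rightarrow$ \pref{HoroLimSArith-onto}, and separately the equivalence \pref{HoroLimSArith-Qgood} $\Leftrightarrow$ \pref{HoroLimSArith-parab}. The first three implications are immediate from the nesting of horoballs: \pref{HoroLimSArith-onto} $\Rightarrow$ \pref{HoroLimSArith-BallsDense} $\Rightarrow$ \pref{HoroLimSArith-SpheresDense} is obvious, and for \pref{HoroLimSArith-SpheresDense} $\Rightarrow$ \pref{HoroLimSArith-onto}, given a horoball $\mathcal{B}$ at Busemann level~$t$ and uniformity constant $C$, choose a horosphere at level $t - C - 1$; any $\GG(Z_S)$-orbit point within distance $C$ of that horosphere then lies inside~$\mathcal{B}$. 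The implication \pref{HoroLimSArith-onto} $\Rightarrow$ \pref{HoroLimSArith-horo} is trivial, since \pref{HoroLimSArith-horo} specializes \pref{HoroLimSArith-onto} to a single orbit. The equivalence \pref{HoroLimSArith-Qgood} $\Leftrightarrow$ \pref{HoroLimSArith-parab} should be essentially definitional from \cref{SarithQgoodDefn}: a $Q$-good flat is constructed from a maximal $Q$-split torus of~$\GG$, its visual boundary consists of directions stabilized by proper $Q$-parabolic subgroups~$\PP$, and the $\PP(Z_S)$-invariance of horospheres at~$\xi$ corresponds to $\xi$ lying on the boundary of such a flat.

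The two deep implications, \pref{HoroLimSArith-horo} $\Rightarrow$ \pref{HoroLimSArith-parab} and \pref{HoroLimSArith-parab} $\Rightarrow$ \pref{HoroLimSArith-onto}, both rest on $S$-arithmetic reduction theory in the sense of Borel, Harder, and Behr. For the contrapositive of \pref{HoroLimSArith-horo} $\Rightarrow$ \pref{HoroLimSArith-parab}, assuming a $Q$-parabolic~$\PP$ fixes $\xi$ and $\PP(Z_S)$ preserves horospheres at~$\xi$, the Busemann function at $\xi$ descends to a well-defined function on $\PP(Z_S) \backslash X_S$. Reduction theory shows that the $\GG(Z_S)$-orbit of a fixed basepoint lies within bounded distance of a union of finitely many $\PP(Z_S)$-translates of a Siegel set, on which this descended Busemann function is bounded above, yielding a horoball at~$\xi$ disjoint from the orbit. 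For \pref{HoroLimSArith-parab} $\Rightarrow$ \pref{HoroLimSArith-onto}, I argue by contrapositive: suppose some horoball $\mathcal{B}$ at~$\xi$ contains no $\GG(Z_S)$-translate of some $y \in X_S$. Applying reduction theory to a sequence $y\gamma_n$ with $\gamma_n \in \GG(Z_S)$ realizing the supremum of Busemann values in the orbit, I extract a $Q$-parabolic $\PP$ whose $P_S$ fixes~$\xi$ and whose $\PP(Z_S)$ stabilizes horospheres at~$\xi$, contradicting \pref{HoroLimSArith-parab}. This last step is the point at which one must translate geometric data at infinity (a limiting direction of a divergent orbit) into algebraic data over~$Q$ (a rational parabolic), which is exactly where reduction theory enters.

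The principal obstacle is the passage from the purely arithmetic, characteristic-zero setting of Avramidi-Morris to the $S$-arithmetic, mixed-characteristic setting. At nonarchimedean~$v$, the factor $X_v$ is a Bruhat-Tits building rather than a smooth Riemannian symmetric space, so arguments relying on differentiable geodesic flows must be replaced by $\CAT(0)$ and apartment-based combinatorial arguments; the direction $\xi \in \bdry X_S$ decomposes into a tuple of directions whose types may vary across factors, and the local data must be assembled into a single $Q$-rational parabolic of $\GG$ rather than merely a product of $Q_v$-parabolics. Reduction theory over global function fields (where $\GG(Z_S)$ may fail to be finitely generated) is also more delicate than in the arithmetic case, and one must consistently use the Harder-Behr form throughout, carefully handling the interaction between Siegel sets in $G_S$ and the Busemann-function geometry on the product $X_S$.
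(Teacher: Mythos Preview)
Your outline has the easy implications reversed in one place and, more seriously, misidentifies the tool needed for the hard direction.

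First, a minor but real slip: you write that \pref{HoroLimSArith-BallsDense} $\Rightarrow$ \pref{HoroLimSArith-SpheresDense} is ``obvious from the nesting of horoballs.'' It is the reverse implication \pref{HoroLimSArith-SpheresDense} $\Rightarrow$ \pref{HoroLimSArith-BallsDense} that is obvious (a horosphere sits inside the corresponding horoball). Knowing that every orbit penetrates arbitrarily deep horoballs does not by itself give you an orbit point near a \emph{prescribed} horosphere level with a uniform constant; the paper never needs this direction directly, routing instead through \pref{HoroLimSArith-SpheresDense} $\Rightarrow$ \pref{HoroLimSArith-BallsDense} $\Rightarrow$ \pref{HoroLimSArith-horo}.

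Second, the equivalence \pref{HoroLimSArith-Qgood} $\Leftrightarrow$ \pref{HoroLimSArith-parab} is not ``essentially definitional.'' The direction $(\neg\ref{HoroLimSArith-parab}) \Rightarrow (\neg\ref{HoroLimSArith-Qgood})$ (\cref{SarithParab->Qgood}) requires producing a single $Q$-torus $\TT$ that simultaneously contains a maximal $Q$-split torus and a maximal $Q_v$-split torus for \emph{every} $v\in S$; this uses the rationality of the variety of maximal tori. The converse direction is not proved directly at all in the paper; it passes through the full cycle.

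The main gap is in your contrapositive of \pref{HoroLimSArith-parab} $\Rightarrow$ \pref{HoroLimSArith-onto}. You propose to take a divergent sequence $y\gamma_n$ and ``extract a $Q$-parabolic $\PP$ whose $P_S$ fixes~$\xi$'' from reduction theory. Reduction theory will indeed tell you that a divergent sequence in $X_S/\GG(Z_S)$ eventually lies in Siegel sets attached to $Q$-parabolics, but it gives you no mechanism for showing that the resulting $\PP$ has anything to do with the boundary point~$\xi$, which is a priori an arbitrary (possibly completely irrational) direction in $\partial X_S$. The paper's proof of the corresponding step (\cref{SarithNotDense->parab}) does not use reduction theory here; it uses measure rigidity. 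One fixes a maximal horospherical subgroup $N_S \subset G_S$ attached to~$\xi$ and applies the $S$-arithmetic Ratner theorem (Margulis--Tomanov, Ratner, and Mohammadi in positive characteristic) to identify $\overline{N_S\Gamma}$ as $M^*_S U_S \Gamma$ for a genuine $Q$-parabolic $\PP = \MM\TT\UU$. Only after this algebraic identification can one argue, via the Langlands decomposition and the failure of coarse density, that $P_S$ fixes~$\xi$. This passage from ``$\xi$-horospheres are not coarsely dense'' to ``a $Q$-parabolic fixes~$\xi$'' is precisely the step that requires homogeneous dynamics, and your proposal does not account for it.
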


\begin{rems} \ 
\noprelistbreak
	\begin{itemize}
	\item (\ref{HoroLimSArith-horo}~$\Leftrightarrow$~\ref{HoroLimSArith-onto}) is a restatement of \cref{HoroLimDefn}.
	\item (\ref{HoroLimSArith-SpheresDense}~$\Rightarrow$~\ref{HoroLimSArith-BallsDense}) is obvious,  because horoballs are bigger than horospheres.
	\item (\ref{HoroLimSArith-BallsDense}~$\Rightarrow$~\ref{HoroLimSArith-horo}) is well known (see, for example, 
		\cite[Lem.~2.3($\Leftarrow$)]{AvramidiMorris}). 
	\end{itemize}
The remaining implications \ref{HoroLimSArith-horo}~$\Rightarrow$~\ref{HoroLimSArith-Qgood}~$\Rightarrow$~\ref{HoroLimSArith-parab}~$\Rightarrow$~\ref{HoroLimSArith-SpheresDense} are proved in the following sections, by fairly straightforward adaptations of arguments in~\cite{AvramidiMorris}.

\end{rems}

\begin{ack}
D.\,W.\,M.~would like to thank A.\,Rapinchuk for answering his questions about tori over fields of positive characteristic.
K.\,W.~gratefully acknowledges the support of the National Science Foundation.
\end{ack}

\section{\texorpdfstring{Proof of (\ref{HoroLimSArith-parab}~$\Rightarrow$~\ref{HoroLimSArith-SpheresDense})}%
{Proof of (3 => 4)}}

(\ref{HoroLimSArith-parab}~$\Rightarrow$~\ref{HoroLimSArith-SpheresDense}) of \cref{HoroLimSArith} is the contrapositive of the following result.

\begin{prop}[cf.\ {\cite[Thm.~4.3]{AvramidiMorris}}] \label{SarithNotDense->parab} 
If the horospheres based at~$\xi$ are not uniformly coarsely dense in $X_S / \GG(Z_S)$, then there is a parabolic $Q$-subgroup\/~$\PP$ of\/~$\GG$, such that
\noprelistbreak
	\begin{enumerate}
	\item \label{SarithNotDense->parab-FixXi}
	$P_S$ fixes~$\xi$, 
	and
	\item \label{SarithNotDense->parab-FixHoro}
	$\PP(Z_S)$ fixes some\/ \textup(or, equivalently, every\/\textup) horosphere based at~$\xi$.
	\end{enumerate}
\end{prop}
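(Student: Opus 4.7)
I plan to prove the contrapositive, adapting the reduction-theoretic argument of~\cite[Thm.~4.3]{AvramidiMorris}. Let $b_\xi \colon X_S \to \real$ denote the Busemann function for~$\xi$ at a fixed basepoint $x_0 \in X_S$, so the horosphere at level~$t$ based at~$\xi$ is the level set $b_\xi^{-1}(t) = H_t$. The hypothesis that $\{H_t\}$ is not uniformly coarsely dense in $X_S/\Gamma$ (with $\Gamma = \GG(Z_S)$) yields sequences $t_n \in \real$ and points $y_n \in H_{t_n}$ whose images $[y_n]$ in the finite-volume quotient $X_S/\Gamma$ escape every compact subset.

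Next I apply $S$-arithmetic reduction theory (Harder in characteristic zero, Harder--Behr in positive characteristic, both resting on Bruhat--Tits theory). After passing to a subsequence, there exist $\gamma_n \in \Gamma$ such that $\gamma_n y_n$ lies in a Siegel set associated to a single proper parabolic $Q$-subgroup $\PP_0 = \MM \ltimes \UU$ of~$\GG$. Concretely, $\gamma_n y_n = u_n a_n k_n \cdot x_0$ with $u_n$ bounded in~$\UU_S$, $k_n$ bounded in~$K_S$, and $a_n$ diverging to infinity in a positive Weyl chamber of a maximal $Q$-split torus $\TT \subset \MM$; the direction of divergence of~$a_n$ in the flat $\TT_S \cdot x_0$ converges to a boundary point~$\eta$ fixed by all of~$(P_0)_S$. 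Since $\gamma_n y_n$ lies on the horosphere based at $\gamma_n \xi$ at level $t_n$ and approaches~$\eta$, a Busemann-function comparison along this flat forces $\gamma_n \xi = \eta$ for all sufficiently large~$n$; for any such~$n$, the conjugate $\PP = \gamma_n^{-1} \PP_0 \gamma_n$ is a parabolic $Q$-subgroup of~$\GG$ with $P_S$ fixing~$\xi$, establishing~(\ref{SarithNotDense->parab-FixXi}).

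For~(\ref{SarithNotDense->parab-FixHoro}), I show that $\PP(Z_S)$ preserves the Busemann function~$b_\xi$, so it fixes every horosphere based at~$\xi$. The unipotent radical of~$\PP$ contributes nothing, since unipotent radicals of parabolics fixing a boundary point preserve every horosphere based there. For the Levi component, the induced translation of~$b_\xi$ factors through a $Q$-rational character~$\chi$ of the split torus~$\TT$; its value on $m \in \MM(Z_S)$ equals $\sum_{v \in S}\log|\chi(m)|_v$, which vanishes by the product formula for the global field~$Q$, since $\chi(m) \in Z_S^\times$. The main obstacle is the careful handling of reduction theory in positive characteristic and the precise identification of~$\xi$ with the cusp direction in step two; but these are technical adaptations of the arithmetic-case arguments of~\cite{AvramidiMorris}.
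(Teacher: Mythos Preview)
Your proposal takes a genuinely different route from the paper's: you attempt to use reduction theory and Siegel sets, whereas the paper's argument rests on the $S$-arithmetic analogue of Ratner's orbit-closure theorem (Margulis--Tomanov, Ratner, and Mohammadi in positive characteristic). That theorem is what produces the parabolic $Q$-subgroup~$\PP$: one analyses the closure $\overline{N_S\Gamma}$ in~$G_S$, where $N_S$ is a maximal horospherical subgroup fixing~$\xi$, and obtains $N_S \subseteq M^*_S U_S$ with $M^*_S U_S \Gamma \subseteq \overline{N_S\Gamma}$ for a Langlands decomposition $P_v = M_v T_v U_v$. The rationality of~$\PP$ comes directly from Ratner, not from reduction theory.

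Your argument has a genuine gap at the crucial step. You assert that because $\gamma_n y_n$ lies on a horosphere based at $\gamma_n\xi$ and converges to~$\eta$ in the visual boundary, ``a Busemann-function comparison forces $\gamma_n\xi = \eta$.'' This is not true: a point can be arbitrarily deep in a Siegel set for~$\PP_0$ (hence close to~$\eta$) while lying on a horosphere based at \emph{any} boundary point other than the antipode of~$\eta$. Already in $\mathbb{H}^2$, every horocycle based at an irrational $\xi$ contains points arbitrarily far into every rational cusp. So knowing where $\gamma_n y_n$ sits says nothing about $\gamma_n\xi$, and there is no mechanism in your outline that pins $\xi$ to a rational direction. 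This is exactly the work Ratner's theorem does in the paper's proof.

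Two smaller issues. First, your extraction of the sequence $y_n \in H_{t_n}$ with $[y_n]\to\infty$ does not follow from the hypothesis as stated: failure of uniform coarse density gives \emph{orbit points} far from horospheres, not horosphere points far into cusps, and every horosphere already contains points projecting deep into every cusp regardless of~$\xi$. Second, in your argument for~(\ref{SarithNotDense->parab-FixHoro}), the claim that the Levi action on $b_\xi$ factors through a single $Q$-rational character~$\chi$ is not automatic; a priori one gets possibly different $Q_v$-characters at each place, and the product formula does not apply unless they all come from one $\chi \in \mathcal{X}^*_Q(\TT)$. The paper handles this by first showing the geodesic~$\gamma$ lies in the convex hull of $x\,T_S$ (using that $M_S$ has no nontrivial continuous homomorphism to~$\real$), which then forces the relevant character to be $Q$-rational.
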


\begin{proof} 
We modify the proof of \cite[Thm.~4.3]{AvramidiMorris}
to deal with minor issues, such as the fact that $G_S$ is not (quite) transitive on~$X_S$.
To avoid technical complications, assume $\GG$ is simply connected. We begin by introducing yet more notation:
\noprelistbreak
	\begin{itemize} \itemsep=\smallskipamount
	\item[($\Gamma$)] Let $\Gamma = \GG(Z_S)$.
	\item[($x$)] Let $x \in X_S$. If $v\in S$ is a nonarchimedean place, then we choose $x$ so that its projection to $X_v$ is a vertex.
	\item[($\gamma$)] Let $\gamma \colon \real \to X_S$ be a geodesic with $\gamma(0)=x$ and $\gamma(+\infty) = \xi$. Let $\gamma^+ \colon [0,\infty) \to X$ be the forward geodesic ray of~$\gamma$. For each $v \in S$, let $\gamma_v$ be the projection of~$\gamma$ to~$X_v$, so $\gamma_v$ is a geodesic in~$X_v$. 
 	\item[($F_S$)] For each $v \in S$, choose a maximal flat (or ``apartment'')~$F_v$ in~$X_v$ that contains~$\gamma_v$. Then $F_S$ is a maximal flat in~$X_S$ that contains~$\gamma$.
	\item[($A_S$)] For each $v \in S$, there is a maximal $Q_v$-split torus~$A_v$ of~$\GG(Q_v)$, such that $A_v$ acts properly and cocompactly on the Euclidean space~$F_v$ by translations. 
Then $A_S$ acts properly and cocompactly on~$F_S$ (by translations). 
	\item[($C_S$)] For each $v \in S$, choose a compact subset~$C_v$ of~$F_v$, such that $C_v A_v = F_v$. Then $C_S A_S = F_S$.
	\item[($A_\gamma$)] Let $A_\gamma = \{\, a \in A_S \mid C_S \, a \cap \gamma \neq \emptyset \,\}$ and $A_\gamma^+ = \{\, a \in A_S \mid C_S \, a \cap \gamma^+ \neq \emptyset \,\}$.
	\item[($F_\perp, A_\perp$)] Let $F_\perp$ be the (codimension-one) hyperplane in~$F_S$ that is orthogonal to the geodesic~$\gamma$ and contains~$x$. Let
		$$ A_\perp = \{\, a \in A_S \mid C_S \, a \cap F_\perp \neq \emptyset \,\} .$$

	\item[($P_v^\xi, N_v$)] For each $v \in S$, let 
		$$P_v^\xi = \bigl\{\, g \in \GG(Q_v) \mathrel{\big|} \text{$\{\, a g a^{-1} \mid a \in A_\gamma^+ \,\}$ is bounded} \,\bigr\} ,$$
so $P_v^\xi$ is a parabolic $Q_v$-subgroup of $\GG(Q_v)$ that fixes~$\xi$. The Iwasawa decomposition \cite[\S3.3.2]{Tits-RedGrpsLocFlds} allows us to choose a maximal horospherical subgroup~$N_v$ of~$\GG(Q_v)$ that is contained in~$P_v^\xi$ and is normalized by~$A_v$, such that $F_v \, N_v = X_v$.
	
	\item[\vbox to 10pt{\hbox{$\begin{pmatrix} P_v, M_v, \\ T_v, M^*_v \end{pmatrix}$}\vss}] By applying the $S$-arithmetic generalization of Ratner's Theorem that was proved independently by Margulis-Tomanov \cite{MargulisTomanov-InvtMeas} and Ratner \cite{Ratner-padic} (or, if $\Char Q \neq 0$,  by applying a theorem of Mohammadi \cite[Cor.~4.2]{Mohammadi-MeasHoroPosChar}), we obtain an $S$-arithmetic analogue of \cite[Cor.~2.13]{AvramidiMorris}. Namely, for some parabolic $Q$-subgroup~$\PP$ of~$\GG$,  if we let $P_v = \PP(Q_v)$ for each $v \in S$, and let $P_v = M_v T_v U_v$ be the Langlands decomposition over~$Q_v$ (so $T_v$ is the maximal $Q_v$-split torus in the center of the reductive group $M_v T_v$, and $U_v$ is the unipotent radical), then we have
	$$ \text{$N_S \subseteq M^*_S \, U_S$ \quad and \quad $M^*_S \, U_S \, \Gamma \subseteq \closure{N_S \, \Gamma}$} , $$
where $M^*_v$ is the product of all the isotropic almost-simple factors of~$M_v$. 
	\end{itemize}

Since $N_v \subseteq P_v$ for every~$v$ (and $P_S$ is parabolic), we have $U_S \subseteq N_S$ and $A_S \subset P_S$ (cf.\  proof of \cite[Lem.~2.10]{AvramidiMorris}). Therefore, since all maximal $Q_v$-split tori of~$P_v$ are conjugate \cite[Thm.~20.9(ii), p.~228]{Borel-LinAlgGrps}, and $M^*_v \, T_v$ contains a maximal $Q_v$-split torus, there is no harm in assuming $A_S \subseteq M^*_S \, T_S$, by replacing $M^*_S \,T_S$ with a conjugate.
Let $A_S^M = A_S \cap M_S = A_S \cap M^*_S$. 

Note that $N_v$ is in the kernel of every continuous homomorphism from $P_v^\xi$ to~$\real$. Since $P_v^\xi$ acts continuously on the set of horospheres based at~$\xi$, and these horospheres are parametrized by~$\real$, this implies that $N_v$ fixes every horosphere based at~$\xi$. Then, since $F_S \, N_S = X_S$, we see that, for each $a \in A_\gamma$, the set $F_\perp \, a \, N_S$ is the horosphere based at~$\xi$ through the point $xa$.  By the definition of $A_\perp$, this implies that the horosphere is at bounded Hausdorff distance from
	$$ \mathcal{H}_a = x a A_\perp \, N_S .$$
(Also note that every horosphere is at bounded Hausdorff distance from some~$\mathcal{H}_a$, since $A_S$ acts cocompactly on~$F_S$.)
We have 
	\begin{align} \label{HaClosure}
	\closure{ a  A_\perp \, N_S \, \Gamma }
	 \supseteq a A_\perp \cdot \closure{ N_S \, \Gamma}
	 \supseteq a A_\perp \cdot M^*_S \, U_S \, \Gamma
	. \end{align}

We claim that $F_\perp  A_S^M$ is not coarsely dense in~$F_S$. Indeed, suppose, for the sake of a contradiction, that the set is coarsely dense. Then $A_\perp A_S^M$ is coarsely dense in~$A_S$, which means there is a compact subset~$K_1$ of~$A_S$, such that $A_S = K_1  A_\perp A_S^M$. 
Also, the Iwasawa decomposition \cite[\S3.3.2]{Tits-RedGrpsLocFlds} of each $\GG(Q_v)$ implies there is a compact subset~$K_S$ of~$G_S$, such that $K_S A_S N_S = G_S$. Then, for every $a \in A_\gamma$, we have
	\begin{align*}
	K_S K_1  \cdot a A_\perp M_S^* U_S
	&= K_S a   (K_1  A_\perp M_S^*)  U_S
	\supseteq K_S a  A_S M_S^*  U_S 
	\supseteq K_S A_S N_S
	= G_S
	. \end{align*}
Since the compact set $K_S K_1$ is independent of~$a$, this (together with \pref{HaClosure}) implies that the sets $\mathcal{H}_a$ are uniformly coarsely dense in $X/\Gamma$. This contradicts the fact that the horospheres based at~$\xi$ are not uniformly coarsely dense.

Since $F_\perp$ is a hyperplane of codimension one in~$F_S$ (and $A_S^M$ is a group that acts by translations), the claim proved in the preceding paragraph implies $F_\perp = F_\perp  A_S^M \supseteq x A_S^M$. This means that $\gamma$ is orthogonal to the convex hull of $x A_S^M$.

On the other hand, we know that $M_S$ centralizes~$T_S$. Therefore, $M_S$ fixes the endpoint~$\xi_T$ of any geodesic ray~$\gamma_T$ in the convex hull of~$xT_S$. So $M_S$ acts (continuously) on the set of horospheres based at~$\xi_T$. However, $M_S$ is the almost-direct product of compact groups and semisimple groups over local fields, so it has no has no nontrivial homomorphism to~$\real$. (For the semisimple groups, this follows from the truth of the Kneser-Tits Conjecture \cite[Thm.~7.6]{PlatonovRapinchukBook}.) Since the horospheres are parametrized by~$\real$, we conclude that $M_S$ fixes every horosphere based at~$\xi_T$. Hence $A_S^M$ also fixes these horospheres. So $x A_S^M$ is contained in the horosphere through~$x$, which means the convex hull of $x A_S^M$ must be perpendicular to the convex hull of $x T_S$. Since $A_S^M T_S$ has finite index in~$A_S$, the conclusion of the preceding paragraph now implies that $\gamma$ is contained in the convex hull of $x T_S$, so $C_{G_S} \bigl( T_S \bigr)$ fixes~$\xi$. 

We also have
	\begin{align*}
	P_S = M_S T_S U_S = C_{G_S} \bigl( T_S \bigr) \, U_S \subseteq C_{G_S} \bigl( T_S \bigr) \, N_S .
	\end{align*}
Since $C_{G_S} \bigl( T_S \bigr)$ and~$N_S$ each fix the point~$\xi$, we conclude that $P_S$ fixes~$\xi$. 
This completes the proof of~\pref{SarithNotDense->parab-FixXi}. 

From here, the proof of~\pref{SarithNotDense->parab-FixHoro} is almost identical to the proof of \cite[Thm.~4.3(2)]{AvramidiMorris}.
\end{proof}

\section{\texorpdfstring{Proof of (\ref{HoroLimSArith-Qgood}~$\Rightarrow$~\ref{HoroLimSArith-parab})}{Proof of (2 => 3)}}

 (\ref{HoroLimSArith-Qgood}~$\Rightarrow$~\ref{HoroLimSArith-parab}) of \cref{HoroLimSArith} is the contrapositive of \cref{SarithParab->Qgood} below.

\begin{notation}
Suppose $\TT$ is a torus that is defined over~$Q$. Let
\noprelistbreak
	\begin{enumerate}
	\item $\mathcal{X}^*_Q(\TT)$ be the set of $Q$-characters of~$\TT$,
	and
	\item $T_S^{(1)} = \bigl\{\, g \in T_S \mathrel{\big|} \prod_{v \in S} \bigl\| \chi(g_v) \bigr\|_v = 1, \ \forall \chi \in \mathcal{X}_Q(\TT) \,\bigr\}$.
	\end{enumerate}
\end{notation}

\begin{defn} \label{SarithQgoodDefn} 
Suppose $\mathcal{F}$ is a flat in~$X_S$ (not necessarily maximal). We say $\mathcal{F}$ is \emph{$Q$-good} if there exists a $Q$-torus~$\TT$, such that 
\noprelistbreak
	\begin{itemize}
	\item $\TT$ contains a maximal $Q$-split torus of~$\GG$,  
	\item $\TT$ contains a maximal $Q_v$-split torus~$A_v$ of $G_v$ for every $v \in S$,
	\item $\mathcal{F}$ is contained in the maximal flat~$F_S$ that is fixed by~$A_S$,
	and
	\item $\mathcal{F}$ is orthogonal to the convex hull of an orbit of $T_S^{(1)}$ in~$F_S$.
	\end{itemize}
\end{defn}

\begin{rem}
 $Q$-good flats are a natural generalization of $\rational$-split flats. Indeed, the two notions coincide in the setting of arithmetic groups. Namely, suppose 
\noprelistbreak
	\begin{itemize}
	\item $Q$ is an algebraic number field, 
	\item $S$ is the set of all archimedean places of~$Q$, 
	\item $\TT$ is a maximal $Q$-split torus in~$\GG$,
	and
	\item $\mathbf{H} = \mathrm{Res}_{Q/\rational} \GG$ is the $\rational$-group obtained from~$\GG$ by restriction of scalars.
	\end{itemize}
Then $T_S$ can be viewed as the real points of a $\rational$-torus in~$\mathbf{H}(\mathbb{R})$, and $T_S^{(1)}$ is the group of real points of the $\rational$-anisotropic part of $T_S$. Thus, in this setting, the $Q$-good flats in the symmetric space of~${G}_S$ are naturally identified with the $\rational$-split flats in the symmetric space of $\mathbf{H}(\real)$. 
\end{rem}

\begin{prop}[cf.\ {\cite[Prop.~4.4]{AvramidiMorris}}] \label{SarithParab->Qgood}
If there is a parabolic $Q$-subgroup\/~$\PP$ of\/~$\GG$, such that 
	$P_S$ fixes~$\xi$, 
	and
	$\PP(Z_S)$ fixes every horosphere based at~$\xi$,
then $\xi$ is on the boundary of a $Q$-good flat in~$X_S$.
\end{prop}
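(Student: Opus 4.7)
The plan is to use the Langlands decomposition $\PP = \MM \TT \UU$ of~$\PP$ over~$Q$ (with~$\TT$ the maximal $Q$-split torus in the center of the Levi~$\MM\TT$) to reduce the Busemann function based at~$\xi$, restricted to~$P_S$, to a continuous character $\beta \colon T_S \to \real$. By the same argument used at the end of the previous section, $U_S$ has no nontrivial continuous homomorphism to~$\real$, and neither does~$M_S$ (by Kneser-Tits for the isotropic factors and compactness for the anisotropic factors), so both $U_S$ and~$M_S$ fix every horosphere based at~$\xi$. Hence the Busemann function on~$P_S$ descends to a continuous character of~$T_S$, and the hypothesis that $\PP(Z_S)$ fixes every horosphere forces $\beta$ to vanish on the image of $\PP(Z_S)$ in~$T_S$.

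The main step is to produce a $Q$-torus~$\TT^*$ of~$\GG$ that simultaneously contains a maximal $Q$-split torus of~$\GG$ and a maximal $Q_v$-split torus~$A_v$ of~$G_v$ for every $v \in S$. Since a $Q$-torus containing a maximal $Q$-split torus~$A$ of~$\GG$ automatically lies in the centralizer $C_\GG(A)$, and we may choose $A$ to lie in the Levi~$\MM\TT$, any such~$\TT^*$ will satisfy $\TT^* \subseteq \PP$, so $\TT^*(Z_S) \subseteq \PP(Z_S)$. This enlargement is the main obstacle: the existence of a $Q$-torus that is $Q_v$-split to maximal rank for every $v \in S$ is an approximation problem on the variety of maximal $Q$-tori of~$\GG$. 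In characteristic zero this follows from weak approximation together with the openness of the maximal-$Q_v$-rank condition, but in positive characteristic some care is needed, and this is presumably the content of the Rapinchuk discussions acknowledged in the introduction.

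Once~$\TT^*$ is in hand, the Dirichlet/Chevalley-Samuel $S$-unit theorem for tori (see, e.g., \cite{PlatonovRapinchukBook}) implies that $\TT^*(Z_S)$ is a cocompact lattice in the group $T_S^{*(1)}$ defined for~$\TT^*$ analogously to $T_S^{(1)}$ for~$\TT$. Since $\beta$ is continuous and vanishes on $\TT^*(Z_S)$ (which has finite index in $\PP(Z_S) \cap \TT^*(Z_S)$ up to negligible factors), continuity forces $\beta \equiv 0$ on~$T_S^{*(1)}$, so every $T_S^{*(1)}$-orbit lies in a single horosphere based at~$\xi$. The $A_v$'s act as translations on a maximal flat~$F_S$ containing the geodesic ray~$\gamma^+$ to~$\xi$, and the convex hull of a $T_S^{*(1)}$-orbit in~$F_S$ is therefore a flat subspace of~$F_S$ perpendicular to~$\gamma$. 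Consequently, the line~$\gamma(\real)$ (or any flat in~$F_S$ that contains~$\gamma$ and is orthogonal to this convex hull) is a $Q$-good flat with~$\xi$ on its boundary, as desired.
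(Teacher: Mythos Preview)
Your proposal is correct and follows essentially the same route as the paper: construct a $Q$-torus $\TT^*\subseteq\PP$ containing both a maximal $Q$-split torus of~$\GG$ and maximal $Q_v$-split tori for each $v\in S$, invoke finiteness of the Tamagawa number to get $\TT^*(Z_S)$ cocompact in $T_S^{*(1)}$, and conclude that the geodesic to~$\xi$ is perpendicular to the $T_S^{*(1)}$-orbit. The paper carries out your ``main step'' explicitly by writing the torus as $\RR\mathbf{L}$, where $\RR$ is a maximal $Q$-split torus of~$\PP$ and $\mathbf{L}$ is a $Q$-torus in the reductive complement~$\MM$ containing maximal $Q_v$-split tori of~$\MM(Q_v)$ (citing \cite[Cor.~3 of \S7.1]{PlatonovRapinchukBook} and, for positive characteristic, Grothendieck's rationality of the variety of maximal tori), and it dispenses with your Busemann-character reduction by arguing directly from $\TT^*(Z_S)\subseteq\PP(Z_S)$.
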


\begin{proof} 
Choose a maximal $Q$-split torus~$\RR$ of~$\PP$. The centralizer of $\RR$ in $\GG$ is an almost direct product $\RR \MM$ for some reductive $Q$-subgroup $\MM$ of $\PP$.

Choose a $Q$-torus~$\mathbf{L}$ of~$\MM$, such that $\mathbf{L}(Q_v)$ contains a maximal $Q_v$-split torus~$B_v$ of~$\MM(Q_v)$ for each $v \in S$.  (This is possible when $\Char Q = 0$ by \cite[Cor.~3 of \S7.1, p.~405]{PlatonovRapinchukBook}, and the same proof works in positive characteristic, because a theorem of A.\,Grothendieck tell us that the variety of maximal tori is rational \cite[Exp.~XIV, Thm.~6.1, p.~334]{DemazureGrothendieck-SGA3},
\cite[Thm.~7.9]{BorelSpringer-Rationality2}.) Let $\TT=\RR\mathbf{L}$ and $A_v=\RR(Q_v)B_v$, so that $\TT$ is a $Q$-torus that contains the maximal $Q$-split torus $\RR$ as well as the maximal $Q_v$-split tori $A_v$ for all $v \in S$.

 Let $F_S$ be the maximal flat corresponding to~$A_S$, and choose some $x \in F_S$. Since $P_S$ fixes~$\xi$, there is a geodesic $\gamma = \{\gamma_t\}$ in~$F$, such that $\lim_{t \to \infty} \gamma_t = \xi$ (and $\gamma_0 = x$). 

Now $\TT(Z_S)$ is a cocompact lattice in $T_S^{(1)}$ (because the ``Tamagawa number'' of~$\TT$ is finite: see \cite[Thm.~5.6, p.~264]{PlatonovRapinchukBook} if $\Char Q = 0$; or see \cite[Thm.~IV.1.3]{Oesterle-Tamagawa} for the general case), and, by assumption, $\TT(Z_S)$ fixes the horosphere through~$x$. This implies that all of~$T_S^{(1)}$ fixes this horosphere, so $x \, T_S^{(1)}$ is contained in the horosphere. Therefore, the convex hull of $x \, T_S^{(1)}$ is perpendicular to the geodesic~$\gamma$, so $\gamma$ is a $Q$-good flat.
\end{proof}

\section{\texorpdfstring{Proof of (\ref{HoroLimSArith-horo}~$\Rightarrow$~\ref{HoroLimSArith-Qgood})}{Proof of (1 => 2)}}

(\ref{HoroLimSArith-horo}~$\Rightarrow$~\ref{HoroLimSArith-Qgood}) of \cref{HoroLimSArith} is the contrapositive of the following result.

\begin{prop}[cf.\ {\cite[Prop.~3.1]{AvramidiMorris} or \cite[Thm.~A]{Hattori-GeomLimSets}}] \label{SArithQGoodNotHoro}
If $\xi$ is on the boundary of a $Q$-good flat, then $\xi$~is not a horospherical limit point for $\GG(Z_S)$.
\end{prop}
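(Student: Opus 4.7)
My strategy is to construct a horoball at $\xi$ disjoint from the orbit $x \cdot \GG(Z_S)$, for a base point $x$ in the maximal flat $F_S$ that contains $\mathcal{F}$; equivalently, to bound the Busemann function $b_\xi$ from below on this orbit. On the $T_S$-orbit of $x$, the Busemann function satisfies $b_\xi(xt) = -\phi(t)$ for a continuous group homomorphism $\phi\colon T_S \to \real$. The $Q$-good hypothesis that $\mathcal{F}$ is orthogonal to a $T_S^{(1)}$-orbit forces $\phi|_{T_S^{(1)}} = 0$, so $\phi$ factors through the embedding $T_S/T_S^{(1)} \hookrightarrow \real^{\mathcal{X}_Q(\TT)}$ given by $t \mapsto \bigl(\log\prod_{v \in S}\|\chi(t_v)\|_v\bigr)_\chi$. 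Consequently there exist characters $\chi^{(i)} \in \mathcal{X}_Q(\TT)$ and real coefficients $c_i$ with
$$
\phi(t) \;=\; \sum_i c_i \sum_{v \in S} \log\|\chi^{(i)}(t_v)\|_v .
$$
After clearing denominators, the $\chi^{(i)}$ may be taken in the character lattice of the maximal $Q$-split subtorus $\RR \subseteq \TT$.

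Since $\TT$ contains a maximal $Q$-split torus of $\GG$, I can choose a parabolic $Q$-subgroup $\PP$ of $\GG$ whose Levi contains $\TT$ and whose closed positive Weyl chamber contains the direction of $\xi$. Each $\chi^{(i)}$ then extends canonically to a $Q$-character of $\PP$ (trivial on the unipotent radical and on the semisimple part of the Levi, since $\chi^{(i)}$ is already trivial on the finite intersection of $\TT$ with the derived Levi). Using the Iwasawa decompositions $G_v = K_v P_v$, I define a $K_S$-invariant height function
$$
\Phi(K_S g) \;=\; -\sum_i c_i \sum_{v \in S} \log\|\chi^{(i)}(p_v)\|_v, \qquad g_v = k_v p_v,
$$
well-defined on $X_S$ up to a bounded error (since $K_v \cap P_v$ is compact and $\chi^{(i)}$ is continuous). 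Because $\Phi$ and $b_\xi$ are horofunctions at $\xi$ with matching $N_S$-invariance and $A_S$-equivariance, after normalizing to agree at $x$ they differ by a bounded function on all of $X_S$.

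It remains to bound $\Phi$ from above on $x \cdot \GG(Z_S)$. For $\gamma \in \PP(Z_S)$ the product formula applied to $\chi^{(i)}(\gamma) \in Q^\times$—combined with the fact that $\gamma \in \PP(\mathcal{O}_v)$ and thus $\|\chi^{(i)}(\gamma)\|_v = 1$ for every $v \notin S$—gives $\sum_{v \in S}\log\|\chi^{(i)}(\gamma)\|_v = 0$, so $\Phi$ is (up to bounded error) zero on $x \cdot \PP(Z_S)$. For general $\gamma \in \GG(Z_S)$, I invoke $S$-arithmetic reduction theory (Harder in the number-field case, Behr and its refinements in the function-field case) to write $G_S$ as a finite union of Siegel sets modulo $\GG(Z_S)$, and use a Minkowski-type lower bound to control the character-based height on each Siegel set. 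Combining these ingredients yields $\Phi(x\gamma) \geq -C$ uniformly in $\gamma$, hence $b_\xi \geq -C'$ on the whole orbit, and any horoball $\{b_\xi < -(C'+1)\}$ is disjoint from $x \cdot \GG(Z_S)$.

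The main obstacle is this last reduction-theoretic step: the archimedean version is handled in \cite{AvramidiMorris}, but in the $S$-arithmetic setting one must handle archimedean and nonarchimedean places simultaneously, and arrange signs so that the dominance of the $\chi^{(i)}$ in the direction of $\xi$ yields the required inequality on the Siegel set. The earlier steps are largely formal once the embedding $T_S/T_S^{(1)} \hookrightarrow \real^{\mathcal{X}_Q(\TT)}$ and the Iwasawa-based extension of $\phi$ to all of $X_S$ are in place.
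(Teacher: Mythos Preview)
Your approach is genuinely different from the paper's, and the difference matters. The paper does \emph{not} bound the Busemann function via height functions and reduction theory. Instead it argues by a Mahler-type ``small element'' criterion: using the $Q$-good hypothesis, it shows that $\gamma(t)$ lies in the span of the vectors $\alpha^F$ dual to the $Q$-roots $\alpha \in \Delta$, renormalizes the metric so that $\langle \alpha^F \mid \beta^F\rangle \le 0$ for $\alpha \neq \beta$ (\cref{SArithInnerProdNeg}), and then applies an elementary linear-algebra lemma (\cref{InSpanMustLarge}) to conclude that at every point $g$ of a deep horosphere, \emph{some} $\alpha \in \Delta$ has $\alpha^S(g)$ large. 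Conjugation by $g^{-1}$ then contracts $(N_\alpha)_S$ strongly, and since $\NN_\alpha(Z_S)$ is cocompact in $(N_\alpha)_S$ (finiteness of Tamagawa number), one finds a nontrivial $h \in \GG(Z_S)$ with $ghg^{-1}$ close to~$e$; a standard injectivity-radius argument then excludes the $\GG(Z_S)$-orbit from a deep horoball. No Siegel sets, no global height bound.

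Your route is closer in spirit to Hattori's archimedean argument than to the one in \cite{AvramidiMorris} (which already uses the small-unipotent method, contrary to what you write). It has two soft spots. First, the sentence ``choose a parabolic $Q$-subgroup $\PP$ \dots\ whose closed positive Weyl chamber contains the direction of $\xi$'' conflates the chamber structure of the $Q$-split torus~$\RR$ with that of the larger $Q_v$-split tori~$A_v$; you need $P_S$ to fix~$\xi$ for your $\Phi$ to be within bounded distance of $b_\xi$, and the $Q$-good hypothesis alone does not make this immediate (the paper gets the analogous containment $\gamma(t) \in \mathrm{span}\{\alpha^F\}$ only after invoking the orthogonality to $T_S^{(1)}$, and even then does not claim $P_S$ fixes~$\xi$). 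Second, and more seriously, you acknowledge that the final step --- a uniform lower bound for $\Phi$ on $x \cdot \GG(Z_S)$ via $S$-arithmetic Siegel sets, uniformly over archimedean and nonarchimedean places and in arbitrary characteristic --- is not carried out. That step is the entire content of the proposition; everything before it is packaging. The paper's approach is preferable precisely because it replaces this global reduction-theoretic input by the much lighter fact that $\NN_\alpha(Z_S)$ is cocompact in $(N_\alpha)_S$.
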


\begin{proof}
Let:
\noprelistbreak
	\begin{itemize}
	\item $\mathcal{F}$ be a $Q$-good flat, such that $\xi$ is on the boundary of~$\mathcal{F}$.
	\item $\gamma$ be a geodesic in~$\mathcal{F}$, such that $\lim_{t \to \infty} \gamma(t) = \xi$.
	\item $\TT$, $A_S$, and~$F_S$ be as in \cref{SarithQgoodDefn}.
	\item $x = \gamma(0) \in F_S$.
	\item $F_S$ be considered as a real vector space with Euclidean inner product, by specifying that the point~$x$ is the zero vector.
	\item $C_x$ be a compact set, such that $C_x A_S = F_S$ (and $x \in C_x$).
	\item $\gamma^\perp$ be the orthogonal complement of the $1$-dimensional subspace~$\gamma$ in the vector space~$F_S$.
	\item $\gamma^\perp_A = \{\, a \in A_S \mid C_x a \cap \gamma^\perp \neq \emptyset \, \}$.
	\item $\gamma_A(t) \in A_S$, such that $\gamma(t) \in C_x \gamma_A(t)$, for each $t \in \real$.
	\item $\RR$ be a maximal $Q$-split torus of~$\GG$ that is contained in~$\TT$.
	\item $\Phi$ be the system of roots of~$\GG$ with respect to~$\RR$.
	\item $\alpha^S \colon T_S \to \real^+$ be defined by $\alpha^S(g) = \prod_{v \in S} \bigl\| \alpha(g_v) \bigr\|_v$ for $\alpha \in \Phi$ (where $\|\cdot\|_v \circ \alpha$ is extended to be defined on all of~$\TT(Q_v)$ by making it trivial on the $Q$-anisotropic part).
	\item $\hat\alpha^S \colon F_S \to \real$ be the linear map satisfying 
		$\hat\alpha^S (xa) = \log \alpha^S(a)$ for all $a \in A_S$.
	\item $\alpha^F \in F_S$, such that $\langle \alpha^F \mid y \rangle = \hat\alpha^S (y)$ for all $y \in F_S$.
	\item $\Phi^{++} = \{\, \alpha \in \Phi \mid \text{$\hat\alpha^S \bigl( \gamma(t) \bigr) > 0$ for $t > 0$} \,\}$.
	\item $\Delta$ be a base of~$\Phi$, such that $\Phi^+$ contains $\Phi^{++}$.
	\item $\Delta^{++} = \Delta \cap \Phi^{++}$.
	\item $\PP_\alpha = \RR_\alpha \MM_\alpha \NN_\alpha$ be the parabolic $Q$-subgroup corresponding to~$\alpha$, for $\alpha \in \Delta$, where
\noprelistbreak
	\begin{itemize}
		\item $\RR_\alpha$ is the one-dimensional subtorus of~$\RR$ on which all roots in $\Delta \smallsetminus \{\alpha\}$ are trivial,
		\item $\MM_\alpha$ is reductive with $Q$-anisotropic center,
		and
		\item the unipotent radical $\NN_\alpha$ is generated by the roots in~$\Phi^+$ that are \emph{not} trivial on~$\RR_\alpha$.
		\end{itemize}
	\end{itemize}

Given any large $t \in \real^+$, we know $\hat\alpha^S \bigl( \gamma(t) \bigr)$ is large for all $\alpha \in \Delta^{++}$. 
By definition, we have $T_S^{(1)} = \bigcap_{\alpha \in \Delta} \ker \alpha^S$. 
Since $\gamma$ is perpendicular to the convex hull of $x \cdot T_S^{(1)}$, this implies that $\gamma(t)$ is in the span of $\{ \alpha^F \}_{\alpha \in \Delta}$.
Also, for $\alpha \in \Delta$, we have
	$$ \langle \alpha^F \mid \gamma(t) \rangle = \hat\alpha^S \bigl( \gamma(t) \bigr) \ge 0 .$$
There is no harm in renormalizing the metric on~$X_S$ by a positive scalar on each irreducible factor (cf.\ \cite[Rem.~5.4]{AvramidiMorris}). This allows us to assume $\langle \alpha^F \mid \beta^F \rangle \le 0$ whenever $\alpha \neq \beta$ (see \cref{SArithInnerProdNeg} below).
Therefore, for any $b \in \gamma^\perp_A$, there is some $\alpha \in \Delta$, such that $\hat\alpha^S \bigl( x \gamma_A(t) b \bigr)$ is large (see \cref{InSpanMustLarge} below). This means $\alpha^S \bigl( \gamma_A(t) \, b \bigr)$ is large. 

Since conjugation by the inverse of $\gamma_A(t) \,  b$ contracts the Haar measure on $(N_\alpha)_S$ by a factor of $\alpha^S \bigl( \gamma_A(t) \,  b \bigr)^k$ for some $k \in \integer^+$, and the action of $N_S$ on $(N_\alpha)_S$ is volume-preserving, this implies that, for any $g \in \gamma_A(t) \,  b \, N_S$, conjugation by the inverse of~$g$ contracts the Haar measure on $(N_\alpha)_S$ by a large factor. Since $\NN_\alpha(Z_S)$ is a cocompact lattice in $(N_\alpha)_S$  (because the ``Tamagawa number'' of~$\NN_\alpha$ is finite: see \cite[Thm.~5.6, p.~264]{PlatonovRapinchukBook} if $\Char Q = 0$; or see \cite[Thm.~IV.1.3]{Oesterle-Tamagawa} for the general case), this implies there is some nontrivial $h \in \NN_\alpha(Z_S)$, such that 
	$\| g h g^{-1} - e \|$ is small.
We conclude that $\xi$ is not a horospherical limit point for $\GG(Z_S)$
 (cf.\ \cite[Lem.~2.5(2)]{AvramidiMorris}).
\end{proof}

\begin{lem} \label{SArithInnerProdNeg}
Assume the notation of the proof of \cref{SArithQGoodNotHoro}. The metric on~$X_S$ can be renormalized so that we have $\langle \alpha^F \mid \beta^F \rangle \le 0$ for all $\alpha,\beta \in \Delta$ with $\alpha \neq \beta$.
\end{lem}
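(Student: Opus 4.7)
The plan is to reduce to the case that $\GG$ is $Q$-almost-simple and then apply Schur's Lemma, using that the $Q$-Weyl group acts irreducibly on the character space of $\RR$. If $\GG$ is a $Q$-almost-direct product $\GG_1 \cdots \GG_k$, then $\Delta$ splits as $\bigsqcup_i \Delta_i$, and for $\alpha \in \Delta_i$ the vector $\alpha^F$ lies in the sub-flat of $F_S$ coming from $\GG_i$; sub-flats for different factors are mutually orthogonal, so $\langle \alpha^F \mid \beta^F \rangle = 0$ when $\alpha$ and $\beta$ lie in different $\Delta_i$. Hence it suffices to treat each factor separately, and I assume $\GG$ is $Q$-almost-simple.

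In that case, the construction of $\alpha^S$ forces $\hat\alpha^S$ to vanish on the subspace of $F_S$ corresponding to the $Q$-anisotropic part of $\TT(Q_S)$, so $\alpha^F$ lies in the orthogonal complement $F_S^{\RR}$ of that subspace. Let $W_Q = N_{\GG(Q)}(\RR)/C_{\GG(Q)}(\RR)$. For each $n \in N_{\GG(Q)}(\RR)$ and each $v \in S$, multiplying $n$ on the left by a suitable element of $C_{\GG(Q_v)}(\RR)$ (chosen so that the conjugate of $A_v$ returns to $A_v$) produces an element of $N_{\GG(Q_v)}(A_v)$, yielding a well-defined action of $W_Q$ on each $F_v^{\RR}$, and hence on $F_S^{\RR}$, by isometries inherited from the $W_v$-invariant metric on $F_v$. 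Because $\GG$ is $Q$-almost-simple, $W_Q$ acts irreducibly on $V_0 := X_*(\RR) \otimes \real$, and each summand $F_v^{\RR}$ is a copy of $V_0$; hence $F_S^{\RR}$ is $W_Q$-isotypic with multiplicity $|S|$.

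By Schur's Lemma, the $W_Q$-invariant inner products on $F_S^{\RR}$ that respect the direct-sum decomposition over $S$ are exactly those of the form $\bigoplus_v \lambda_v \langle \cdot \mid \cdot \rangle_0$ for a fixed $W_Q$-invariant inner product $\langle \cdot \mid \cdot \rangle_0$ on $V_0$ and scalars $\lambda_v > 0$. Rescaling the metric on each $X_v$ by a positive scalar (a special case of rescaling each irreducible factor uniformly) rescales $\lambda_v$, so I can arrange that all the $\lambda_v$ are equal; the inner product on $F_S^{\RR}$ then becomes a positive scalar multiple of the ``standard'' $W_Q$-invariant form under which $\Delta$ is a base of $\Phi$ in the usual normalization. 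A classical property of bases of root systems gives $\langle \alpha \mid \beta \rangle_0 \le 0$ for distinct $\alpha, \beta \in \Delta$ (after identifying $V_0 \cong V_0^*$ via $\langle \cdot \mid \cdot \rangle_0$), and translating back through the identifications $\hat\alpha_v = c_v \alpha$ (with $c_v > 0$ the standard archimedean or nonarchimedean normalization constant) yields $\langle \alpha^F \mid \beta^F \rangle \le 0$.

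I expect the main obstacle to be verifying the $W_Q$-equivariance carefully in the non-split case, where $W_Q$ is a proper subgroup of $W_v$ and one must chase through definitions to confirm that the constructed action is well-defined on $F_v^{\RR}$ and preserves the inherited inner product; once this is done, the bookkeeping of the various positive normalization constants contributes only positive factors and does not affect the sign.
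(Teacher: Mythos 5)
Your argument is essentially the paper's: reduce to the $Q$-(almost-)simple case, observe that each $\alpha^F_v$ lives in the $\RR$-part of the flat, realize the $Q$-Weyl group inside the relative Weyl group over $Q_v$ (your hands-on construction is exactly the content of the cited Borel--Tits result), and invoke uniqueness of the $W_Q$-invariant inner product to reduce to the standard obtuseness of simple roots. The only cosmetic difference is that equalizing the scalars $\lambda_v$ across places is unnecessary --- a sum $\sum_v c_v\langle\alpha\mid\beta\rangle$ with arbitrary $c_v>0$ is already nonpositive --- and the identification of the orthogonal complement of the anisotropic part with $\mathcal{X}_*(\RR)\otimes\real$ deserves the explicit justification the paper gives via \cite[Lem.~2.8]{AvramidiMorris}.
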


\begin{proof}
When $v$ is archimedean, the Killing form provides a metric on~$X_v$. 
We now construct an analogous metric when $v$ is nonarchimedean. To do this, let $\Phi_v$ be the root system of~$\GG$ with respect to the maximal $Q_v$-split torus~$A_v$, let $\mathfrak{t} \oplus \bigoplus_{\alpha \in \Phi_v} \weight_\alpha$ be the corresponding weight-space decomposition of the Lie algebra of~$G_v$, choose a uniformizer~$\pi_v$ of~$Q_v$, let $\mathcal{X}_*(A_v)$ be the group of co-characters of~$A_v$, and define a $\integer$-bilinear form $\langle \ \mid \ \rangle_v \colon \mathcal{X}_*(A_v) \times \mathcal{X}_*(A_v) \to \real$ by
	$$ \langle \varphi_1 \mid \varphi_2 \rangle_v 
	= \sum_{\alpha \in \Phi_v} v \Bigl( \alpha \bigl( \varphi_1(\pi_v) \bigr) \Bigr) \, v \Bigl(  \alpha \bigl(\varphi_2(\pi_v) \bigr) \Bigr) 
	\bigl( \dim \weight_\alpha \bigr) .$$
This extends to a positive-definite inner product on $\mathcal{X}_*(A_v) \otimes \real$ (and the extension is also denoted by $\langle \ \mid \ \rangle_v$).
It is clear that this inner product is invariant under the Weyl group, so it determines a metric on~$X_v$ \cite[\S2.3]{Tits-RedGrpsLocFlds}. By renormalizing, we may assume that the given metric on~$X_v$ coincides with this one.

Let $\EE$ be the $Q$-anisotropic part of~$\TT$. Then it is not difficult to see that $\mathcal{X}_* \bigl( \RR \bigr) \otimes \real$ is the orthogonal complement of $\mathcal{X}_* \bigl( \EE(Q_v) \bigr) \otimes \real$, with respect to the inner product $\langle \ \mid \ \rangle_v$ (cf.\ \cite[Lem.~2.8]{AvramidiMorris}). 
Since every $Q$-root annihilates $\EE(Q_v)$, this implies that the $F_v$-component $\alpha^F_v$ of~$\alpha^F$ belongs to the convex hull of $x \, \RR(Q_v)$, for every $\alpha \in \Phi$.

From \cite[Cor.~5.5]{BorelTits-GrpRed}, we know that the Weyl group over~$Q$ is the restriction to~$\RR$ of a subgroup of the Weyl group over~$Q_v$. So the restriction of $\langle \ \mid \ \rangle_v$ to~$\mathcal{X}_* \bigl( \RR \bigr) \otimes \real$ is invariant under the $Q$-Weyl group. Assume, for simplicity, that $\GG$ is $Q$-simple, so the invariant inner product on $\mathcal{X}_* \bigl( \RR \bigr) \otimes \real$ is unique (up to a positive scalar).  (The general case is obtained by considering the simple factors individually.) This means that, after passing to the dual space $\mathcal{X}^* \bigl( \RR \bigr) \otimes \real$, the inner product $\langle \ \mid \ \rangle_v$ must be a positive scalar multiple~$c_v$ of the usual inner product (for which the reflections of the root system~$\Phi$ are isometries), so 
	$\langle \alpha^F_v \mid \beta^F_v \rangle_v = c_v \langle \alpha \mid \beta \rangle$
 for all $\alpha,\beta \in \Delta$. Since it is a basic property of bases in a root system that $\langle \alpha \mid \beta \rangle \le 0$ whenever $\alpha \neq \beta$, we therefore have
 	\begin{align*}
	\langle \alpha^F \mid \beta^F \rangle 
	= \sum_{v \in S} \langle \alpha^F_v \mid \beta^F_v \rangle_v
	= \sum_{v \in S} c_v \langle \alpha \mid \beta \rangle
	= \sum_{v \in S} \bigl( {> 0} \bigr)\bigl( {\le 0} \bigr)
	\le 0
	. & \qedhere \end{align*}
\end{proof}

\begin{lem}[{}{\cite[Lem.~2.6]{AvramidiMorris}}] \label{InSpanMustLarge}
Suppose
\noprelistbreak
	\begin{enumerate}
	\item $v,v_1,\ldots,v_n \in \real^k$, with $v \neq 0$,
	\item \label{InSpanMustLarge-span}
	$v$ is in the span of $\{v_1,\ldots,v_n\}$,
	\item \label{InSpanMustLarge-acute}
	$\langle v \mid v_i \rangle \ge 0$ for all~$i$,
	\item \label{InSpanMustLarge-obtuse}
	$\langle v_i \mid v_j \rangle \le 0$ for $i \neq j$,
	and
	\item $T \in \real^+$.
	\end{enumerate}
Then, for all sufficiently large $t \in \real^+$ and all $w \perp v$, there is some~$i$, such that $\langle tv + w | v_i \rangle > T$.
\end{lem}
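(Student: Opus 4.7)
My plan is to reduce the lemma to a one-line weighted-average bound, after first upgrading the expansion in hypothesis~(b) to one with non-negative coefficients.

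First, I would show that under (b), (c), and (d), $v$ admits an expansion $v = \sum_{i \in J} c_i v_i$ with $J \subseteq \{1,\ldots,n\}$ non-empty and $c_i > 0$ for each $i \in J$. Starting from any expansion $v = \sum_i \alpha_i v_i$ given by (b), split the indices into $I = \{i : \alpha_i < 0\}$ and $J = \{i : \alpha_i \geq 0\}$, and set $u = \sum_{i \in I}(-\alpha_i)\, v_i$, so that $v + u = \sum_{j \in J} \alpha_j v_j$. Then
\[
\|u\|^2 + \langle u \mid v \rangle = \langle u \mid v + u \rangle = \sum_{i \in I,\; j \in J}(-\alpha_i)\, \alpha_j\, \langle v_i \mid v_j \rangle \leq 0
\]
by~(d) (note $i \neq j$ since $I \cap J = \emptyset$), while (c) gives $\langle u \mid v \rangle \geq 0$. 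Together these force $u = 0$, so discarding the vanishing terms leaves an expansion with strictly positive coefficients on the non-empty set~$J$ (non-empty because $v \neq 0$).

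Second, given such an expansion, note that $w \perp v$ yields
\[
\sum_{i \in J} c_i \langle w \mid v_i \rangle = \langle w \mid v \rangle = 0,
\]
so $\sum_{i \in J} c_i \langle tv + w \mid v_i \rangle = t\|v\|^2$. Since the maximum of a finite family of reals is at least any positive-weighted average,
\[
\max_{i \in J} \langle tv + w \mid v_i \rangle \geq \frac{t\|v\|^2}{\sum_{i \in J} c_i},
\]
and taking $t > T \bigl(\sum_{i \in J} c_i\bigr) / \|v\|^2$ makes the right side exceed~$T$. This threshold depends only on the fixed data $v, v_1, \ldots, v_n, T$, not on~$w$, which gives the desired uniform conclusion.

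The main obstacle is the first step: converting an arbitrary real expansion into a non-negative one uses (c) and (d) in a slightly subtle way through the quadratic form $\langle u \mid v + u \rangle$. Once the non-negative expansion is available, the averaging bound is immediate and automatically exploits the constraint $\langle w \mid v \rangle = 0$ in exactly the right way.
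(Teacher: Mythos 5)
Your proof is correct. The paper itself gives no proof of this lemma---it simply cites \cite[Lem.~2.6]{AvramidiMorris}---so there is no in-paper argument to compare against; your two-step route (first forcing $u=0$ via $\|u\|^2+\langle u\mid v\rangle\le 0$ together with $\langle u\mid v\rangle\ge 0$ to obtain an expansion $v=\sum_{i\in J}c_iv_i$ with $c_i>0$, then using $\sum_{i\in J}c_i\langle tv+w\mid v_i\rangle=t\|v\|^2$ and the weighted-average bound) is the standard argument, is complete, and correctly yields a threshold for $t$ that is independent of $w$.
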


\end{document}